\newtheorem{theorem}{Theorem}[section]
\newtheorem{definition}[theorem]{Definition}
\newtheorem{lemma}[theorem]{Lemma}
\newtheorem{proposition}[theorem]{Proposition}
\newtheorem{example}[theorem]{Example}
\newtheorem{corollary}[theorem]{Corollary}
\title{Left residuated operators induced by posets with a unary operation}
\author{Ivan Chajda and Helmut L\"anger}
\date{}
\begin{document}
\footnotetext[1]{Support of the research by \"OAD, project CZ~04/2017, as well as by IGA, project P\v rF~2018~012, and support of the research of the second author by the Austrian Science Fund (FWF), project I~1923-N25 entitled ``New perspectives on residuated posets'', is gratefully acknowledged.}
\maketitle
\begin{abstract}
The concept of operator left residuation has been introduced by the authors in their previous paper \cite{CL1}. Modifications of so-called quantum structures, in particular orthomodular posets, like pseudo-orthomodular, pseudo-Boolean and Boolean posets are investigated here in order to show that they are operator left residuated or even operator residuated. In fact they satisfy more general sufficient conditions for operator residuation assumed for bounded posets equipped with a unary operation. It is shown that these conditions may be also necessary if a generalized version using subsets instead of single elements is considered. The above listed posets can serve as an algebraic semantics for the logic of quantum mechanics in a broad sense. Moreover, our approach shows connections to substructural logics via the considered residuation.
\end{abstract}
 
{\bf AMS Subject Classification:} 06A11, 06C15, 06E75, 03G12, 03G25

{\bf Keywords:} operator residuation, operator left adjointness, Boolean poset, pseudo-Boolean poset, pseudo-orthomodular poset, generalized operator residuation

\section{Introduction}

It was shown by G.~Birkhoff and J.~von~Neumann (\cite{BV}) and, independently, by K.~Husimi (\cite H) that orthomodular lattices can serve as an algebraic semantic of the logic of quantum mechanics. Namely, the class of event-state systems in quantum mechanics is usually identified with the set of projection operators on a Hilbert space $\mathbf H$ and this set is in a bijective correspondence with the set of all closed linear subspaces of $\mathbf H$. However, certain doubts concerning the relevance of this representation arose when it was shown that the class of orthomodular lattices arising in this way does not generate the variety of orthomodular lattices. In other words, there exist orthomodular lattices which do not represent a physical system in the logic of quantum mechanics. The reason is that some equational properties of event-state systems are not fairly reflected by the proposed mathematical abstraction. This was the reason for alternative approaches, see e.g.\ \cite{GLP16} and \cite{GLP}. In particular, an algebraic semantic for the logic of quantum mechanics was found among orthomodular posets and their modifications.

Orthomodular lattices have similar properties as algebraic structures used for substructural logics, see e.g.\ \cite{GJKO}. The authors proved in \cite{CL17a} and \cite{CL17b} that every orthomodular lattice can be converted into a so-called left residuated l-groupoid. They showed in \cite{CL2} that this result can be easily extended to a certain class of bounded lattices with a unary operation which, of course, contains the variety of orthomodular lattices. Hence, the natural question arises if this approach can be extended to ordered sets with a unary operation. For this purpose, so-called residuated operators were introduced in \cite{CL1} and several classes of ordered sets with a unary operation turned out to be operator residuated. A prominent role among these posets play the so-called pseudo-orthomodular posets which are a direct generalization of orthomodular lattices, but serve also as good candidates for an algebraic semantic of the logic of quantum mechanics. And again, there arises the natural question if the posets listed in \cite{CL1} really exhaust all possible cases.

The aim of the present paper is to provide several simple conditions under which a bounded poset with a unary operation can be organized into an operator left residuated poset. Similarly as it was done for lattices in \cite{CL2}, we ask if these conditions are not only sufficient but also necessary. It is shown that if subsets instead of single elements are considered then these generalized conditions characterize the class of posets which can be converted into operator residuated ones.

\section{Adjointness of operators}

Recall from \cite{BV} that an {\em orthomodular lattice} is a bounded lattice $(L,\vee,\wedge,{}',0,1)$ with a unary oparation $'$ which is a complementation and an antitone involution (see e.g.\cite K) satisfying the {\em orthomodular law}
\[
x\leq y\text{ implies }x=y\wedge(x\vee y')
\]
or, equivalently,
\[
x\leq y\text{ implies }y=x\vee(y\wedge x').
\]
A {\em left residuated lattice} (or {\em integral l-groupoid} in the terminology of \cite{GJKO}) is a bounded lattice $(L,\vee,\wedge,\odot,\rightarrow,0,1)$ with two more binary operations $\odot$ and $\rightarrow$ satisfying
\begin{align*}
& x\odot1\approx1\odot x\approx x, \\
& x\odot y\leq z\text{ if and only if }x\leq y\rightarrow z\text{ (the so-called {\em left adjointness})}.
\end{align*}
We put $x':=x\rightarrow0$. If $\odot$ is, moreover, commutative then we call the previous property simply {\em adjointness}. If $\odot$ is associative and monotonous in every variable then it is called a {\em t-norm} (see \cite{GJKO}).

It was shown by the authors in \cite{CL17a} and \cite{CL17b} that taking
\begin{align*}
      x\odot y & :=(x\vee y')\wedge y, \\
x\rightarrow y & :=(y\wedge x)\vee x'
\end{align*}
in an orthomodular lattice $\mathbf L=(L,\vee,\wedge,{}',0,1)$ yields a left residuated lattice $(L,\vee,\wedge,\odot,$ $\rightarrow,0,1)$ where $x'=x\rightarrow0$ coincides with the complementation in $\mathbf L$.

However, as shown in \cite{CL1}, if $(P,\leq,{}',0,1)$ is an orthomodular poset (or even a Boolean poset) then such operations $\odot$ and $\rightarrow$ need not exist. In order to avoid these complications we study bounded ordered sets with a unary operation. We introduced in \cite{CL1} the following notion:

\begin{definition}\label{def1}
An {\em operator left residuated poset} is an ordered seventuple $\mathbf P=(P,\leq,{}',M,R,0,1)$ where $(P,\leq,{}',0,1)$ is a bounded poset with a unary operation and $M$ and $R$ are mappings from $P^2$ to $2^P$ satisfying the following conditions for all $x,y,z\in P$:
\begin{enumerate}
\item[{\rm(i)}] $M(x,1)\approx M(1,x)\approx L(x)$,
\item[{\rm(ii)}] $M(x,y)\subseteq L(z)$ if and only if $L(x)\subseteq R(y,z)$,
\item[{\rm(iii)}] $R(x,0)\approx L(x')$.
\end{enumerate}
Condition {\rm(ii)} is called {\em operator left adjointness}. If $M$ is {\em commutative} then {\rm(ii)} is called {\em operator adjointness} only and $\mathbf P$ is called an {\em operator residuated poset}.
\end{definition}

In \cite{CL1}, the definition contains one more condition which, however, follows from (i) and (ii), see the following lemma.

\begin{lemma}\label{lem5}
Every operator left residuated poset $(P,\leq,{}',M,R,0,1)$ satisfies the following condition for all $x,y\in P$:
\[
R(x,y)=P\text{ if and only if }x\leq y.
\]
\end{lemma}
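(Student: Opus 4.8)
The plan is to reduce the biconditional to the defining operator left adjointness (ii) by a single well-chosen instantiation, exploiting that the lower cone of the top element exhausts the whole poset. First I would record the two purely order-theoretic facts on which everything rests: that $L(1)=P$, since every element of $P$ lies below $1$, and that for all $x,y\in P$ one has $L(x)\subseteq L(y)$ if and only if $x\leq y$, which is immediate from reflexivity and antisymmetry of $\leq$.

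Next I would observe that, because $R(x,y)\subseteq P=L(1)$ holds automatically, the set-equality $R(x,y)=P$ is equivalent to the reverse inclusion $L(1)\subseteq R(x,y)$. This is the decisive move: it rewrites $R(x,y)=P$ as an inclusion of exactly the shape occurring on the right-hand side of condition (ii), so that the adjointness can be applied directly.

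Now I would invoke (ii), instantiating its universally quantified variables $x,y,z$ as $1,x,y$ respectively, to obtain that $M(1,x)\subseteq L(y)$ if and only if $L(1)\subseteq R(x,y)$. Using (i) to replace $M(1,x)$ by $L(x)$ turns the left-hand inclusion into $L(x)\subseteq L(y)$, and the order-theoretic fact from the first step converts this into $x\leq y$. Chaining the equivalences then yields $R(x,y)=P$ iff $L(1)\subseteq R(x,y)$ iff $L(x)\subseteq L(y)$ iff $x\leq y$, which is the claim.

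Since every step is an exact logical equivalence, I do not expect a genuine obstacle; the only delicate points are the bookkeeping of the instantiation into (ii) and the recognition that $L(1)=P$ is precisely what permits the passage from the set-equality to the inclusion. Note that the argument uses only (i) and (ii), consistent with the surrounding remark that this condition is redundant in Definition~\ref{def1}; condition (iii) is not needed.
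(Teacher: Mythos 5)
Your proof is correct and follows exactly the same chain of equivalences as the paper's own argument: $R(x,y)=P$ iff $L(1)\subseteq R(x,y)$ iff $M(1,x)\subseteq L(y)$ iff $L(x)\subseteq L(y)$ iff $x\leq y$, using (i) and (ii) precisely as the authors do. Nothing to add.
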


\begin{proof}
For $x,y\in P$ the following are equivalent:
\begin{align*}
R(x,y) & =P, \\
  L(1) & \subseteq R(x,y), \\
M(1,x) & \subseteq L(y), \\
  L(x) & \subseteq L(y), \\
     x & \leq y.
\end{align*}
\end{proof}

For posets $(P,\leq,{}')$ with a unary operation we define the following two conditions:
\begin{align}
             L(x) & \subseteq L(U(L(U(x,y'),y),y'))\text{ for all }x,y\in P,\label{equ1} \\
L(U(L(x,y),y'),y) & \subseteq L(x)\text{ for all }x,y\in P,\label{equ2}
\end{align}
and the following two mappings from $P^2$ to $2^P$:
\begin{align}
M(x,y) & :=L(U(x,y'),y)\text{ for all }x,y\in P,\label{equ3} \\
R(x,y) & :=L(U(L(y,x),x'))\text{ for all }x,y\in P.\label{equ4}
\end{align}

\begin{lemma}\label{lem1}
Let $(P,\leq,{}')$ be a poset with a unary operation and $M$ and $R$ defined by {\rm(\ref{equ3})} and {\rm(\ref{equ4})}, respectively. Then {\rm(\ref{equ1})} implies $M(x,y)\subseteq L(z)\Rightarrow L(x)\subseteq R(y,z)$ and {\rm(\ref{equ2})} implies $L(x)\subseteq R(y,z)\Rightarrow M(x,y)\subseteq L(z)$.
\end{lemma}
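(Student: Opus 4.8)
The plan is to treat both implications as formal consequences of the antitone Galois connection between the operators $L$ and $U$, using (\ref{equ1}) and (\ref{equ2}) as the only substantial ingredients. Throughout I would rely on the standard facts that $A\subseteq B$ implies $L(B)\subseteq L(A)$ and $U(B)\subseteq U(A)$, that $U(L(U(S)))=U(S)$ for every $S$, and that $U(L(x))=U(x)$ for a single element $x$ (since $x=\max L(x)$). I would also record the two automatic inclusions $M(x,y)=L(U(x,y'),y)\subseteq L(y)$ and $U(L(z,y),y')\subseteq U(y')$, which are the inclusions that make the two hypotheses fit.

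For the first implication, assume $M(x,y)\subseteq L(z)$. I would first rewrite (\ref{equ1}) as $L(x)\subseteq L(U(M(x,y),y'))$, which is legitimate because $M(x,y)=L(U(x,y'),y)$ is exactly the inner expression occurring in (\ref{equ1}). Combining the hypothesis with $M(x,y)\subseteq L(y)$ gives $M(x,y)\subseteq L(z)\cap L(y)=L(z,y)$. Adjoining $y'$ and applying the antitone maps $U$ and then $L$ yields $L(U(M(x,y),y'))\subseteq L(U(L(z,y),y'))$, and the right-hand side is precisely $R(y,z)$ by (\ref{equ4}). Chaining with the rewritten (\ref{equ1}) gives $L(x)\subseteq R(y,z)$, as desired.

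For the second implication, assume $L(x)\subseteq R(y,z)=L(U(L(z,y),y'))$. Applying $U$ to this inclusion and using $U(L(U(S)))=U(S)$ together with $U(L(x))=U(x)$ gives $U(L(z,y),y')\subseteq U(x)$. Intersecting with the automatic inclusion $U(L(z,y),y')\subseteq U(y')$ produces $U(L(z,y),y')\subseteq U(x,y')$. Applying $L$ and adjoining $y$ then gives $M(x,y)=L(U(x,y'),y)\subseteq L(U(L(z,y),y'),y)$. Finally, (\ref{equ2}) with $x$ replaced by $z$ reads $L(U(L(z,y),y'),y)\subseteq L(z)$, and combining the last two inclusions yields $M(x,y)\subseteq L(z)$.

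The computations themselves are routine; the main obstacle is purely organizational, namely keeping the antitone reversal of inclusions straight and, more subtly, spotting the two free inclusions $M(x,y)\subseteq L(y)$ and $U(L(z,y),y')\subseteq U(y')$. These are what let me reinsert the missing $y$- or $y'$-component so that the inner expressions match the forms occurring in (\ref{equ1}) and (\ref{equ2}) exactly; without them the hypotheses $M(x,y)\subseteq L(z)$ and $L(x)\subseteq R(y,z)$ would not line up with the two assumed conditions.
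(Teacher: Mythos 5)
Your proof is correct and follows essentially the same route as the paper's: both implications are handled by the same Galois-connection manipulations, with your ``free inclusions'' $M(x,y)\subseteq L(y)$ and $U(L(z,y),y')\subseteq U(y')$ playing exactly the role of the intersections with $L(b)$ and $U(b')$ that the paper inserts, and with the same use of $U(L(U(S)))=U(S)$ and $U(L(x))=U(x)$ in the second implication. There is no substantive difference in method, only in bookkeeping.
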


\begin{proof}
Assume $a,b,c\in P$. If (\ref{equ1}) and $M(a,b)\subseteq L(c)$ then
\begin{align*}
L(a) & \subseteq L(U(L(U(a,b'),b),b'))=L(U(L(U(a,b'),b)\cap L(b),b'))= \\
     & =L(U(M(a,b)\cap L(b),b'))\subseteq L(U(L(c)\cap L(b),b'))=L(U(L(c,b),b'))=R(b,c).
\end{align*}
If (\ref{equ2}) and $L(a)\subseteq R(b,c)$ then
\begin{align*}
M(a,b) & =L(U(a,b'),b)=L(U(a)\cap U(b'),b)=L(U(L(a))\cap U(b'),b)\subseteq \\
       & \subseteq L(U(R(b,c))\cap U(b'),b)=L(U(L(U(L(c,b),b')))\cap U(b'),b)= \\
       & =L(U(L(c,b),b')\cap U(b'),b)=L(U(L(c,b),b'),b)\subseteq L(c).
\end{align*}
\end{proof}

\begin{definition}\label{def2}
Recall {\rm(}e.g.\ from {\rm\cite{CR})} that a {\em distributive poset} is a poset $(P,\leq)$ satisfying one of the following equivalent identities:
\begin{align*}
L(U(x,y),z) & \approx L(U(L(x,z),L(y,z))), \\
U(L(x,y),z) & \approx U(L(U(x,z),U(y,z))).
\end{align*}
A {\em poset with complementation} is an ordered quintuple $\mathbf P=(P,\leq,{}',0,1)$ such that $(P,\leq,0,1)$ is a bounded poset and $'$ is a unary operation on $P$ satisfying the following conditions for all $x,y\in P$:
\begin{enumerate}
\item[{\rm(i)}] $L(x,x')\approx\{0\}$ and $U(x,x')\approx\{1\}$,
\item[{\rm(ii)}] $x\leq y$ implies $y'\leq x'$,
\item[{\rm(iii)}] $(x')'\approx x$.
\end{enumerate}
\end{definition}

As mentioned in the introduction, we introduce several kinds of posets with complementation which generalize orthomodular lattices.

The poset $\mathbf P$ with complementation is called a {\em Boolean poset} if $(P,\leq)$ is distributive. Of course, every Boolean algebra is a Boolean poset but there are interesting examples of Boolean posets which are not lattices, see e.g.\ \cite{CL2}. In every case, Boolean posets are orthomodular posets and hence they can be considered as quantum structures.

The poset $\mathbf P$ with complementation is called a {\em pseudo-Boolean poset} if it satisfies one of the following equivalent identities:
\begin{align*}
L(U(x,y),y') & \approx L(x,y'), \\
U(L(x,y),y') & \approx U(x,y').
\end{align*}
Pseudo-Boolean posets are certain generalization of Boolean ones but they are closely connected to the following posets.

The poset $\mathbf P$ with complementation is called a {\em pseudo-orthomodular poset} if it satisfies one of the following equivalent identities:
\begin{align*}
L(U(L(x,y),y'),y) & \approx L(x,y), \\
U(L(U(x,y),y'),y) & \approx U(x,y).
\end{align*}
It is evident that pseudo-orthomodular posets are generalizations of orthomodular lattices. Namely, if a poset $(P,\leq,{}',0,1)$ with complementation is a lattice satisfying these identities then $U(x,y)\approx U(x\vee y)$ and $L(x,y)\approx L(x\wedge y)$. Thus our equalities yield
\begin{align*}
L(((x\wedge y)\vee y')\wedge y) & \approx L((x\wedge y)\vee y',y)\approx L(U((x\wedge y)\vee y'),y)\approx L(U(x\wedge y,y'),y)\approx \\
& \approx L(U(L(x\wedge y),y'),y)\approx L(U(L(x,y),y'),y)\approx L(x,y)\approx L(x\wedge y)
\end{align*}
whence
\[
((x\wedge y)\vee y')\wedge y\approx x\wedge y
\]
which is the orthomodular law. Hence, these posets can serve as an algebraic semantics of the logic of quantum mechanics. The advantage of this concept is that we need not assume $x\leq y$ as in the definition of orthomodular posets.

It is easy to see that every Boolean poset is pseudo-Boolean and every pseudo-Boolean poset is pseudo-orthomodular (cf.\ \cite{CL1}).

An example of a pseudo-orthomodular poset which is neither Boolean nor orthomodular is depicted in Fig.~1. It is not orthomodular because e.g.\ $b\leq c'$, but $b\vee c$ does not exist. It is evident that it is not Boolean because it is not distributive.

\vspace*{3mm}

\[
\setlength{\unitlength}{7mm}
\begin{picture}(18,8)
\put(9,0){\circle*{.3}}
\put(6,2){\circle*{.3}}
\put(8,2){\circle*{.3}}
\put(10,2){\circle*{.3}}
\put(12,2){\circle*{.3}}
\put(6,4){\circle*{.3}}
\put(12,4){\circle*{.3}}
\put(6,6){\circle*{.3}}
\put(8,6){\circle*{.3}}
\put(10,6){\circle*{.3}}
\put(12,6){\circle*{.3}}
\put(9,8){\circle*{.3}}
\put(1,4){\circle*{.3}}
\put(17,4){\circle*{.3}}
\put(9,0){\line(-3,2)3}
\put(9,0){\line(-1,2)1}
\put(9,0){\line(1,2)1}
\put(9,0){\line(3,2)3}
\put(9,8){\line(-3,-2)3}
\put(9,8){\line(-1,-2)1}
\put(9,8){\line(1,-2)1}
\put(9,8){\line(3,-2)3}
\put(6,2){\line(0,1)4}
\put(12,2){\line(0,1)4}
\put(6,4){\line(1,1)2}
\put(6,2){\line(1,1)4}
\put(8,2){\line(1,1)4}
\put(10,2){\line(1,1)2}
\put(8,2){\line(-1,1)2}
\put(10,2){\line(-1,1)4}
\put(12,2){\line(-1,1)4}
\put(12,4){\line(-1,1)2}
\put(1,4){\line(2,-1)8}
\put(1,4){\line(2,1)8}
\put(17,4){\line(-2,-1)8}
\put(17,4){\line(-2,1)8}
\put(8.85,-.75){$0$}
\put(5.4,1.9){$a$}
\put(7.2,1.9){$b$}
\put(10.45,1.9){$c$}
\put(12.4,1.9){$d$}
\put(5.4,3.9){$e$}
\put(.4,3.9){$f$}
\put(12.4,3.9){$e'$}
\put(17.4,3.9){$f'$}
\put(5.3,5.9){$d'$}
\put(7.2,5.9){$c'$}
\put(10.45,5.9){$b'$}
\put(12.4,5.9){$a'$}
\put(8.85,8.4){$1$}
\put(8.2,-1.5){{\rm Fig.\ 1}}
\end{picture}
\]

\vspace*{8mm}

\begin{theorem}\label{th1}
Let $(P,\leq,{}',0,1)$ be a bounded poset with a unary operation satisfying both conditions {\rm(\ref{equ1})} and {\rm(\ref{equ2})} and the identity $1'\approx0$ and $M$ and $R$ defined by {\rm(\ref{equ3})} and {\rm(\ref{equ4})}, respectively. Then $(P,\leq,{}',M,R,0,1)$ is an operator left residuated poset.
\end{theorem}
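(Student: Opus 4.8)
The plan is to verify directly the three defining conditions (i)--(iii) of Definition~\ref{def1} for the seventuple $(P,\leq,{}',M,R,0,1)$, with $M$ and $R$ given by (\ref{equ3}) and (\ref{equ4}). The central observation is that condition (ii), the operator left adjointness, is nothing but the conjunction of the two implications supplied by Lemma~\ref{lem1}: since we assume both (\ref{equ1}) and (\ref{equ2}), Lemma~\ref{lem1} yields $M(x,y)\subseteq L(z)\Rightarrow L(x)\subseteq R(y,z)$ together with its converse, hence the full equivalence asserted in (ii). Thus the whole substance of the adjointness part has already been isolated in Lemma~\ref{lem1}, and the remaining effort is confined to the two boundary conditions (i) and (iii), which I expect to be routine computations with the bounds $0$ and $1$.

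For (i) I would evaluate $M(x,1)$ and $M(1,x)$ directly. Using the hypothesis $1'\approx0$ together with $U(0)=P$ one gets $U(x,1')=U(x,0)=U(x)$, so that $M(x,1)=L(U(x),1)=L(U(x))\cap L(1)=L(x)$, where the last step uses the standard identity $L(U(x))=L(x)$ and $L(1)=P$. Symmetrically, $U(1,x')=\{1\}$ because $U(1)=\{1\}$ and $x'\leq1$, whence $M(1,x)=L(\{1\},x)=L(1)\cap L(x)=L(x)$. This establishes $M(x,1)\approx M(1,x)\approx L(x)$.

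For (iii) I would compute $R(x,0)=L(U(L(0,x),x'))$. Since $0$ is the least element, $L(0,x)=L(0)\cap L(x)=\{0\}$, and then $U(\{0\},x')=U(0)\cap U(x')=U(x')$ because $U(0)=P$; applying $L$ and the identity $L(U(x'))=L(x')$ gives $R(x,0)=L(x')$, as required. I anticipate no genuine obstacle in this final statement: once Lemma~\ref{lem1} is in hand, the only care needed is the bookkeeping of the combined lower/upper-bound notation, reading $L(A,y)$ as $L(A)\cap L(y)$, and the repeated collapse of nested operators via $L(U(z))=L(z)$, which is precisely what reduces each of the expressions above to a single lower cone.
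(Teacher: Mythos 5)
Your proposal is correct and follows essentially the same route as the paper: condition (ii) is exactly the conjunction of the two implications in Lemma~\ref{lem1}, and conditions (i) and (iii) are verified by the same direct computations collapsing $L(U(\cdot))$ and using $1'\approx0$, $L(1)=U(0)=P$. No gaps.
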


\begin{proof}
\
\begin{enumerate}
\item[(i)] $M(x,1)\approx L(U(x,1'),1)\approx L(U(x,1'))\approx L(U(x,0))\approx L(U(x))\approx L(x)$, \\
$M(1,x)\approx L(U(1,x'),x)\approx L(1,x)\approx L(x)$,
\item[(ii)] follows from Lemma~\ref{lem1},
\item[(iii)] $R(x,0)\approx L(U(L(0,x),x'))\approx L(U(0,x'))\approx L(U(x'))\approx L(x')$.
\end{enumerate}
\end{proof}

We show that the posets mentioned above are among those assumed in Theorem~\ref{th1}.

\begin{example}\label{ex1}
Every pseudo-orthomodular poset satisfies both {\rm(\ref{equ1})} and {\rm(\ref{equ2})}. This can be seen as follows: If $(P,\leq,{}',0,1)$ is a pseudo-orthomodular poset and $a,b\in P$ then
\begin{align*}
             L(a) & =L(U(a))\subseteq L(U(a,b'))=L(U(L(U(a,b'),b),b')), \\
L(U(L(a,b),b'),b) & =L(a,b)\subseteq L(a).
\end{align*}
\end{example}

For posets $(P,\leq,{}')$ with a unary operation we define the following two conditions:
\begin{align}
        L(x) & \subseteq L(U(L(x,y),y'))\text{ for all }x,y\in P,\label{equ7} \\
L(U(x,y'),y) & \subseteq L(x)\text{ for all }x,y\in P\label{equ8}
\end{align}
and the following two mappings from $P^2$ to $2^P$:
\begin{align}
M(x,y) & :=L(x,y)\text{ for all }x,y\in P,\label{equ9} \\
R(x,y) & :=L(U(y,x'))\text{ for all }x,y\in P.\label{equ10}
\end{align}
Observe that $M$ is commutative.

\begin{lemma}\label{lem3}
Let $(P,\leq,{}')$ be a poset with a unary operation and $M$ and $R$ defined by {\rm(\ref{equ9})} and {\rm(\ref{equ10})}, respectively. Then {\rm(\ref{equ7})} implies $M(x,y)\subseteq L(z)\Rightarrow L(x)\subseteq R(y,z)$ and {\rm(\ref{equ8})} implies $L(x)\subseteq R(y,z)\Rightarrow M(x,y)\subseteq L(z)$.
\end{lemma}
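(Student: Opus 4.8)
The plan is to imitate the proof of Lemma~\ref{lem1}: substitute the definitions (\ref{equ9}) and (\ref{equ10}) into each of the two implications so that each one becomes a pure inclusion between lower cones, and then discharge it using the corresponding hypothesis (\ref{equ7}) or (\ref{equ8}) together with the standard antitone identities $L(U(a))=L(a)$, $U(L(a))=U(a)$ and the ``Galois bookkeeping'' $L(A,b)=L(A)\cap L(b)$, $U(A,b)=U(A)\cap U(b)$. Fix $a,b,c\in P$. Writing the implications out under these definitions, the premise $M(a,b)\subseteq L(c)$ reads $L(a,b)\subseteq L(c)$, the conclusion $L(a)\subseteq R(b,c)$ reads $L(a)\subseteq L(U(c,b'))$, and $R(b,c)=L(U(c,b'))$ throughout.

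For the first implication I would start from (\ref{equ7}) applied to $x=a$, $y=b$, which gives $L(a)\subseteq L(U(L(a,b),b'))$. The premise $L(a,b)\subseteq L(c)$ yields $U(L(a,b))\supseteq U(L(c))=U(c)$, hence $U(L(a,b),b')=U(L(a,b))\cap U(b')\supseteq U(c)\cap U(b')=U(c,b')$; applying the antitone operator $L$ then gives $L(U(L(a,b),b'))\subseteq L(U(c,b'))=R(b,c)$, and chaining this with (\ref{equ7}) finishes the first implication.

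The second implication is more direct. From the premise $L(a)\subseteq L(U(c,b'))$ I would intersect both sides with $L(b)$ to obtain $L(a,b)=L(a)\cap L(b)\subseteq L(U(c,b'))\cap L(b)=L(U(c,b'),b)$, and then invoke (\ref{equ8}) with $x=c$, $y=b$, namely $L(U(c,b'),b)\subseteq L(c)$, to conclude $M(a,b)=L(a,b)\subseteq L(c)$.

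I expect the only genuine subtlety to lie in the first implication, where one must pass from the inclusion $L(a,b)\subseteq L(c)$ between lower cones to the reverse inclusion $U(L(a,b))\supseteq U(c)$ between upper cones and then recombine it with the fixed factor $U(b')$ inside the outer $U(\cdot)$; the identity $U(L(c))=U(c)$ is exactly what makes the right-hand side collapse to $R(b,c)$. The second implication needs nothing beyond $L(A,b)=L(A)\cap L(b)$ and a single, correctly instantiated application of (\ref{equ8}).
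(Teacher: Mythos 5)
Your proposal is correct and follows essentially the same route as the paper: both implications are proved by instantiating (\ref{equ7}) at $(a,b)$ and (\ref{equ8}) at $(c,b)$, using $U(L(c))=U(c)$ and the intersection identities $U(A,b')=U(A)\cap U(b')$, $L(A,b)=L(A)\cap L(b)$ exactly as in the paper's chain of inclusions. The only cosmetic difference is that in the first implication you push the premise through $U$ first and apply the antitone $L$ once at the end, whereas the paper keeps everything inside $L(\cdot)$ throughout; the content is identical.
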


\begin{proof}
Assume $a,b,c\in P$. If (\ref{equ7}) and $M(a,b)\subseteq L(c)$ then
\begin{align*}
L(a) & \subseteq L(U(L(a,b),b'))=L(U(L(a,b))\cap U(b'))=L(U(M(a,b))\cap U(b'))\subseteq \\
     & \subseteq L(U(L(c))\cap U(b'))=L(U(c)\cap U(b'))=L(U(c,b'))=R(b,c).
\end{align*}
If (\ref{equ8}) and $L(a)\subseteq R(b,c)$ then
\[
M(a,b)=L(a,b)=L(a)\cap L(b)\subseteq R(b,c)\cap L(b)=L(U(c,b'))\cap L(b)=L(U(c,b'),b)\subseteq L(c).
\]
\end{proof}

\begin{theorem}\label{th2}
Let $(P,\leq,{}',0,1)$ be a bounded poset with a unary operation satisfying both conditions {\rm(\ref{equ7})} and {\rm(\ref{equ8})} and $M$ and $R$ defined by {\rm(\ref{equ9})} and {\rm(\ref{equ10})}, respectively. Then $(P,\leq,{}',M,R,0,1)$ is an operator residuated poset.
\end{theorem}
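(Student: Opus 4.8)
The plan is to verify directly that the seventuple $(P,\leq,{}',M,R,0,1)$ satisfies conditions (i)--(iii) of Definition~\ref{def1} together with commutativity of $M$; this structure then qualifies as an operator residuated poset. The argument will run completely parallel to the proof of Theorem~\ref{th1}, the only genuine input being Lemma~\ref{lem3}. I would first dispose of commutativity, which is the extra requirement beyond being merely \emph{left} residuated: since $M(x,y)=L(x,y)=L(y,x)=M(y,x)$, the mapping $M$ is commutative (as already observed in the text). Hence it remains only to check the three defining conditions of an operator left residuated poset.

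For condition (i), I would compute $M(x,1)=L(x,1)=L(x)\cap L(1)=L(x)$, using $L(1)=P$, and symmetrically $M(1,x)=L(1,x)=L(x)$. Condition (iii) is equally direct: $R(x,0)=L(U(0,x'))=L(U(x'))=L(x')$, where $U(0)=P$ gives $U(0,x')=U(x')$ and the elementary identity $L(U(a))=L(a)$, valid for any single element $a$, finishes the computation.

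The decisive step is condition (ii), the operator adjointness. Here I would simply invoke Lemma~\ref{lem3}: assumption (\ref{equ7}) yields the implication $M(x,y)\subseteq L(z)\Rightarrow L(x)\subseteq R(y,z)$, and assumption (\ref{equ8}) yields the converse implication, so the two hypotheses together produce the biconditional required in (ii). I do not expect any serious obstacle in this theorem: the whole verification reduces to the elementary poset facts $L(1)=P$, $U(0)=P$ and $L(U(a))=L(a)$, combined with the already-proven Lemma~\ref{lem3}, which carries all the substantive content of the residuation. The main conceptual point worth stating explicitly is merely that, because $M$ here is commutative, the conclusion strengthens from \emph{operator left residuated} to \emph{operator residuated}.
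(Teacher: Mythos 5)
Your proposal is correct and follows exactly the paper's own argument: conditions (i) and (iii) are verified by the same elementary computations with $L(1)=P$, $U(0)=P$ and $L(U(a))=L(a)$, condition (ii) is delegated to Lemma~\ref{lem3}, and the commutativity of $M$ (immediate from $L(x,y)=L(y,x)$) upgrades the conclusion to operator residuated. Nothing is missing.
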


\begin{proof}
\
\begin{enumerate}
\item[(i)] $M(x,1)\approx L(x,1)\approx L(x)$, \\
$M(1,x)\approx L(1,x)\approx L(x)$,
\item[(ii)] follows from Lemma~\ref{lem3},
\item[(iii)] $R(x,0)\approx L(U(0,x'))\approx L(U(x'))\approx L(x')$ .
\end{enumerate}
Since $M$ is commutative, $(P,\leq,{}',M,R,0,1)$ is an operator residuated poset.
\end{proof}

Again, pseudo-Boolean and hence also Boolean posets are among those posets assumed in Theorem~\ref{th2}, see the following example.

\begin{example}\label{ex2}
Every pseudo-Boolean poset satisfies {\rm(\ref{equ7})} and {\rm(\ref{equ8})}. This can be seen as follows: If $(P,\leq,{}',0,1)$ is a pseudo-Boolean poset and $a,b\in P$ then
\begin{align*}
        L(a) & =L(U(a))\subseteq L(U(a,b'))=L(U(L(a,b),b')), \\
L(U(a,b'),b) & =L(a,b)\subseteq L(a).
\end{align*}
\end{example}

Combining Theorems~\ref{th1} and \ref{th2} and Examples~\ref{ex1} and \ref{ex2} we conclude

\begin{corollary}\label{cor1}
If $(P,\leq,{}',0,1)$ is a pseudo-Boolean poset and $M$ and $R$ are defined by {\rm(\ref{equ9})} and {\rm(\ref{equ10})}, respectively, then $(P,\leq,{}',M,R,0,1)$ is an operator residuated poset. If $(P,\leq,{}',0,1)$ is a pseudo-orthomodular poset satisfying the identity $1'\approx0$ and $M$ and $R$ are defined by {\rm(\ref{equ3})} and {\rm(\ref{equ4})}, respectively, then $(P,\leq,{}',M,R,0,1)$ is an operator left residuated poset.
\end{corollary}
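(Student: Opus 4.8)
The plan is to read this corollary as a straightforward bookkeeping exercise: each of its two assertions is obtained by feeding one of the Examples into the corresponding Theorem, so the entire task reduces to checking that the hypotheses already verified earlier are exactly those demanded by Theorems~\ref{th1} and~\ref{th2}. No new computation should be needed.

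For the first assertion I would start from a pseudo-Boolean poset $(P,\leq,{}',0,1)$. By Example~\ref{ex2} such a poset satisfies both (\ref{equ7}) and (\ref{equ8}). These are precisely the two conditions required in the hypothesis of Theorem~\ref{th2}, and the mappings $M$ and $R$ in the corollary are defined by (\ref{equ9}) and (\ref{equ10}), matching that theorem verbatim. Hence Theorem~\ref{th2} applies directly and yields that $(P,\leq,{}',M,R,0,1)$ is an operator residuated poset; the commutativity of $M$ that upgrades operator left residuation to full operator residuation was already noted after (\ref{equ10}) and is reconfirmed inside Theorem~\ref{th2}.

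For the second assertion I would argue symmetrically, replacing Example~\ref{ex2} and Theorem~\ref{th2} by Example~\ref{ex1} and Theorem~\ref{th1}. Starting from a pseudo-orthomodular poset $(P,\leq,{}',0,1)$, Example~\ref{ex1} supplies conditions (\ref{equ1}) and (\ref{equ2}). The extra identity $1'\approx0$ is assumed in the statement, and in fact is no real restriction: since a pseudo-orthomodular poset is a poset with complementation, the axiom $L(x,x')\approx\{0\}$ at $x=1$ gives $L(1,1')=L(1')=\{0\}$, and $1'\in L(1')$ then forces $1'=0$. With (\ref{equ1}), (\ref{equ2}) and $1'\approx0$ in hand and $M$, $R$ given by (\ref{equ3}) and (\ref{equ4}), Theorem~\ref{th1} applies and delivers an operator left residuated poset.

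I do not expect any genuine obstacle here, since every computational step has already been discharged in the quoted Theorems and Examples. The only point to be careful about is the matching of the two \emph{different} pairs of defining formulas, namely (\ref{equ9})--(\ref{equ10}) for the pseudo-Boolean case versus (\ref{equ3})--(\ref{equ4}) for the pseudo-orthomodular case, together with the corresponding distinction between operator residuation in the first assertion and merely operator left residuation in the second; keeping these two tracks separate is the whole content of the argument.
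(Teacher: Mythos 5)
Your proof is correct and is essentially the paper's own argument: the corollary is obtained exactly by combining Example~\ref{ex2} with Theorem~\ref{th2} and Example~\ref{ex1} with Theorem~\ref{th1}. Your side remark that $1'\approx0$ already follows from the complementation axiom $L(x,x')\approx\{0\}$ is a correct (and not strictly needed) bonus observation.
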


It is well-known that in a residuated lattice each of the operations $\odot$ and $\rightarrow$ determines the other one. We can prove a similar result also for the posets listed above.

\begin{proposition}
\
\begin{enumerate}
\item[{\rm(i)}] If $(P,\leq,{}',0,1)$ is a pseudo-orthomodular poset and $M$ and $R$ are defined by {\rm(\ref{equ3})} and {\rm(\ref{equ4})}, respectively, then
\begin{align*}
L((M(y',x))') & \approx R(x,y), \\
L((R(y,x'))') & \approx M(x,y). 
\end{align*}
\item[{\rm(ii)}] If $(P,\leq,{}',0,1)$ is a poset with complementation and $M$ and $R$ are defined by {\rm(\ref{equ9})} and {\rm(\ref{equ10})}, respectively, then
\begin{align*}
L((M(y',x))') & \approx R(x,y), \\
L((R(y,x'))') & \approx M(x,y). 
\end{align*}
\end{enumerate}
\end{proposition}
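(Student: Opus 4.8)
The plan is to reduce all four identities to two elementary facts that hold in any poset equipped with an antitone involution. First I would record the duality between lower and upper cones under the unary operation: for every subset $S\subseteq P$, writing $S':=\{s':s\in S\}$, one has $(L(S))'=U(S')$ and $(U(S))'=L(S')$. Both equalities follow immediately from the fact that $'$ is an antitone involution (conditions (ii) and (iii) of Definition~\ref{def2}): if $p\leq s$ for all $s\in S$ then $p'\geq s'$ for all $s\in S$, and conversely $q\geq s'$ for all $s$ forces $q'\leq s$ for all $s$, so the image set $(L(S))'$ is exactly $U(S')$. Second I would use the universal Galois-connection identity $L(U(L(A)))=L(A)$, valid for every $A\subseteq P$, together with the obvious fact that elementwise complementation distributes over union, $(A\cup B)'=A'\cup B'$.

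For part (ii) the computation is then immediate. Since $M(y',x)=L(y',x)$, the first duality fact gives $(M(y',x))'=U((y',x)')=U(y,x')$, whence $L((M(y',x))')=L(U(y,x'))=R(x,y)$. For the second identity, $R(y,x')=L(U(x',y'))$; applying the two duality facts in turn yields $(R(y,x'))'=U((U(x',y'))')=U(L(x,y))$, so that $L((R(y,x'))')=L(U(L(x,y)))=L(x,y)=M(x,y)$, the last step being the Galois identity with $A=\{x,y\}$.

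Part (i) proceeds along exactly the same lines, only the expressions are longer. Starting from $M(y',x)=L(U(y',x'),x)$, I would push $'$ inside using the duality facts and distributivity over union, obtaining $(M(y',x))'=U(L(y,x),x')$ and hence $L((M(y',x))')=L(U(L(y,x),x'))=R(x,y)$, using $L(y,x)=L(x,y)$. For the second identity, from $R(y,x')=L(U(L(x',y),y'))$ the same manipulations give $(R(y,x'))'=U(L(U(x,y'),y))$, so that $L((R(y,x'))')=L(U(L(U(x,y'),y)))$, which collapses to $L(U(x,y'),y)=M(x,y)$ by the Galois identity applied to $A=U(x,y')\cup\{y\}$.

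The only real care is bookkeeping: one must track which argument of $M$ and $R$ receives the complement, and apply the two duality facts in the correct order when $'$ is pulled through a nested $L(U(\cdots))$ expression. I expect the main (though modest) obstacle to be establishing the duality facts $(L(S))'=U(S')$ and $(U(S))'=L(S')$ cleanly for arbitrary subsets $S$; once these are in hand the four identities are purely formal substitutions. It is worth noting that nothing about the posets is used beyond the antitone involution and the universal identity $L(U(L(A)))=L(A)$, so the pseudo-orthomodular and complementation hypotheses enter only to guarantee that $'$ is an antitone involution.
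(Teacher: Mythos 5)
Your proposal is correct and follows essentially the same route as the paper: the paper's proof is just the chain of equalities obtained by pushing $'$ through the nested cones via $(L(S))'=U(S')$ and $(U(S))'=L(S')$ and then collapsing with $L(U(L(A)))=L(A)$, exactly the two facts you isolate. Your closing observation that only the antitone involution is needed (so the pseudo-orthomodular hypothesis in (i) is not really used) is also consistent with the paper's computation.
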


\begin{proof}
\
\begin{enumerate}
\item[(i)]
\begin{align*}
L((M(y',x))') & \approx L((L(U(y',x'),x))')\approx L(U(L(y,x),x'))\approx R(x,y), \\
L((R(y,x'))') & \approx L((L(U(L(x',y),y')))')\approx L(U(L(U(x,y'),y)))\approx L(U(x,y'),y)\approx \\
              & \approx M(x,y),
\end{align*}
\item[(ii)]
\begin{align*}
L((M(y',x))') & \approx L((L(y',x))')\approx L(U(y,x'))\approx R(x,y), \\
L((R(y,x'))') & \approx L((L(U(x',y')))')\approx L(U(L(x,y)))\approx L(x,y)\approx M(x,y).
\end{align*}
\end{enumerate}
\end{proof}

\section{A characterization of posets satisfying generalized operator residuation}

The conditions (\ref{equ7}),(\ref{equ8}) as well as (\ref{equ9}),(\ref{equ10}) which are formulated for variables can be expressed also for subsets of $P$ in the following way:

For posets $(P,\leq,{}')$ with a unary operation we define $A':=\{x'\mid x\in A\}$ for all subsets $A$ of $P$. Moreover, we define the following two conditions:
\begin{align}
        L(A) & \subseteq L(U(L(A,B),B'))\text{ for all }A,B\subseteq P,\label{equ11} \\
L(U(A,B'),B) & \subseteq L(A)\text{ for all }A,B\subseteq P\label{equ12}
\end{align}
and the following two binary operations on $2^P$:
\begin{align}
M(A,B) & :=L(A,B)\text{ for all }A,B\subseteq P,\label{equ13} \\
R(A,B) & :=L(U(B,A'))\text{ for all }A,B\subseteq P.\label{equ14}
\end{align}
In case $A=\{x\}$ and $B=\{y\}$ we will write simply $M(x,y)$ and $R(x,y)$ as previously. Of course, taking singletons in (\ref{equ11}) and (\ref{equ12}) instead of $A$ and $B$ yields (\ref{equ7}) and (\ref{equ8}), respectively. Hence, the new conditions and definitions include the previous ones as a particular case. Also our definition of operator adjointness can be extended to subsets of $P$ as follows:
\begin{align}
& \text{for all }A,B,C\subseteq P, M(A,B)\subseteq L(C)\text{ implies }L(A)\subseteq R(B,C),\label{equ15} \\
& \text{for all }A,B,C\subseteq P, L(A)\subseteq R(B,C)\text{ implies }M(A,B)\subseteq L(C).\label{equ16}
\end{align}
The ordered seventuple $\mathbf P=(P,\leq,{}',M,R,0,1)$ will be called a {\em generalized operator left residuated poset} if it satisfies (i) and (iii) of Definition~\ref{def1} as well as (\ref{equ15}) and (\ref{equ16}), i.e.\ if for all $A,B,C\subseteq P$, 
\begin{align}
& M(A,B)\subseteq L(C)\text{ is equivalent to }L(A)\subseteq R(B,C).\label{equ5}
\end{align}
Condition (\ref{equ5}) will be called {\em generalized operator left adjointness}. If $M$ is commutative then (\ref{equ5}) is called {\em generalized operator adjointness} only and $\mathbf P$ is called a {\em generalized operator residuated poset}. It is evident that taking singletons instead of $A,B,C$ in generalized operator adjointness, we obtain condition (ii) from Definition~\ref{def1}. Now we are able to prove a result analogous to Lemma~\ref{lem3}, but in a stronger version.

\begin{theorem}\label{th3}
Let $(P,\leq,{}')$ be a poset with a unary operation and $M$ and $R$ defined by {\rm(\ref{equ13})} and {\rm(\ref{equ14})}, respectively. Then {\rm(\ref{equ11})} and {\rm(\ref{equ15})} are equivalent, and {\rm(\ref{equ12})} and {\rm(\ref{equ16})} are equivalent.
\end{theorem}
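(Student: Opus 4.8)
The plan is to establish both equivalences by splitting each into two implications and grouping them as ``condition $\Rightarrow$ adjointness'' and ``adjointness $\Rightarrow$ condition''. The implications (\ref{equ11})$\Rightarrow$(\ref{equ15}) and (\ref{equ12})$\Rightarrow$(\ref{equ16}) are the set-valued counterparts of Lemma~\ref{lem3}, so I would first transcribe those computations with $A,B,C$ in place of $a,b,c$. For (\ref{equ12})$\Rightarrow$(\ref{equ16}) this copies over verbatim: assuming $L(A)\subseteq R(B,C)=L(U(C,B'))$, one gets $M(A,B)=L(A)\cap L(B)\subseteq L(U(C,B'))\cap L(B)=L(U(C,B'),B)\subseteq L(C)$, where the last step is (\ref{equ12}) with $C$ in the role of $A$. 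The one place where the singleton proof does not survive is the identity $U(L(\{c\}))=U(\{c\})$ used in Lemma~\ref{lem3}, which fails for a general set. To repair (\ref{equ11})$\Rightarrow$(\ref{equ15}) I would instead note that the premise $M(A,B)=L(A,B)\subseteq L(C)$ already forces $U(C)\subseteq U(L(A,B))$: for $u\in U(C)$ and $\ell\in L(A,B)\subseteq L(C)$, choosing any $c_0\in C$ gives $\ell\le c_0\le u$, hence $\ell\le u$. With this, $L(A)\subseteq L(U(L(A,B),B'))=L(U(L(A,B))\cap U(B'))\subseteq L(U(C)\cap U(B'))=R(B,C)$.

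For the converse (\ref{equ16})$\Rightarrow$(\ref{equ12}) I expect a direct ``plug-in'': apply (\ref{equ16}) to the triple $(U(A,B'),B,A)$. Its premise reads $L(U(A,B'))\subseteq R(B,A)=L(U(A,B'))$, which is trivially true, and its conclusion $M(U(A,B'),B)=L(U(A,B'),B)\subseteq L(A)$ is exactly (\ref{equ12}). Nothing extra is needed because here the premise is an inclusion of a set into itself.

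The hard part will be the remaining converse (\ref{equ15})$\Rightarrow$(\ref{equ11}), where the analogous single instantiation breaks down: the natural choice $C:=L(A,B)$ turns the premise of (\ref{equ15}) into $L(A,B)\subseteq L(L(A,B))$, which is false in general, since it would demand that every common lower bound of $A\cup B$ lie below every other one. My plan is therefore to argue pointwise over upper bounds rather than fixing one $C$. Fix $A,B$ and let $u\in U(L(A,B),B')$ be arbitrary. Because $u\in U(L(A,B))$ we have $L(A,B)\subseteq L(u)$, so the premise $M(A,B)\subseteq L(\{u\})$ holds, and (\ref{equ15}) with $C=\{u\}$ gives $L(A)\subseteq R(B,\{u\})=L(U(\{u\},B'))$. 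Since $u\in U(B')$ we have $u\in U(\{u\},B')$, whence $L(U(\{u\},B'))\subseteq L(\{u\})=L(u)$; thus every lower bound of $A$ lies below $u$. Letting $u$ range over $U(L(A,B),B')$ and intersecting yields $L(A)\subseteq L(U(L(A,B),B'))$, i.e.\ (\ref{equ11}). I would highlight this pointwise application of (\ref{equ15}) at the singletons $\{u\}$ as the decisive step; everything else is routine bookkeeping. The only delicate point to keep in mind is how $L$ and $U$ act on the empty set, which is precisely why the chaining argument above is written so as to consume an actual element $c_0\in C$.
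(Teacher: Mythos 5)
Your proof is correct, and three of the four implications follow the paper's own route essentially verbatim: for (\ref{equ12})$\Rightarrow$(\ref{equ16}) and (\ref{equ16})$\Rightarrow$(\ref{equ12}) the paper argues exactly as you do (in particular it also uses the single instantiation $(U(A,B'),B,A)$, whose premise $L(U(A,B'))\subseteq R(B,A)$ is an inclusion of a set into itself), and for (\ref{equ11})$\Rightarrow$(\ref{equ15}) the paper keeps the shape of Lemma~\ref{lem3} but replaces the singleton identity $U(L(c))=U(c)$ by the inclusion $U(C)\subseteq U(L(C))$, which is interchangeable with your observation that the hypothesis forces $U(C)\subseteq U(L(A,B))$; both versions need an element of $C$ to chain through, so they are the same repair. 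The genuine divergence is in (\ref{equ15})$\Rightarrow$(\ref{equ11}): the paper does take the instantiation $C:=L(A,B)$ that you reject, and its chain passes through the line $M(A,B)\subseteq L(L(A,B))$, which is precisely the inclusion $L(A,B)\subseteq L(L(A,B))$ you identify as false --- rightly so, since it would force any two elements of $L(A,B)$ to lie below each other, hence $|L(A,B)|\le 1$, and it already fails for $A=B=\{1\}$ in the four-element Boolean algebra (where the theorem's conclusion nevertheless holds). Your pointwise argument, applying (\ref{equ15}) at the singletons $C=\{u\}$ for each $u\in U(L(A,B),B')$ and then intersecting over $u$, is a correct and complete replacement for that step, and it also handles $U(L(A,B),B')=\emptyset$ gracefully; this is the main added value of your write-up over the published proof. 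The one caveat you share with the paper is the tacit restriction to nonempty subsets (at least for $C$ in (\ref{equ15})); since you explicitly flag this, it would be worth adding a sentence restricting (\ref{equ11})--(\ref{equ16}) to nonempty $A,B,C$.
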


\begin{proof}
Assume $A,B,C\subseteq P$. \\
(\ref{equ11}) $\Rightarrow$ (\ref{equ15}): \\
If $M(A,B)\subseteq L(C)$ then
\begin{align*}
L(A) & \subseteq L(U(L(A,B),B'))=L(U(L(A,B))\cap U(B'))=L(U(M(A,B))\cap U(B'))\subseteq \\
     & \subseteq L(U(L(C))\cap U(B'))\subseteq L(U(C)\cap U(B'))=L(U(C,B'))=R(B,C).
\end{align*}
(\ref{equ15}) $\Rightarrow$ (\ref{equ11}): \\
Any of the following assertions implies the next one:
\begin{align*}
L(A,B) & \subseteq L(A,B), \\
M(A,B) & \subseteq L(A,B), \\
M(A,B) & \subseteq L(L(A,B)), \\
  L(A) & \subseteq R(B,L(A,B)), \\
  L(A) & \subseteq L(U(L(A,B),B')).
\end{align*}
(\ref{equ12}) $\Rightarrow $(\ref{equ16}): \\
If $L(A)\subseteq R(B,C)$ then
\begin{align*}
M(A,B) & =L(A,B)=L(A)\cap L(B)\subseteq R(B,C)\cap L(B)=L(U(C,B'))\cap L(B)= \\
       & =L(U(C,B'),B)\subseteq L(C).
\end{align*}
(\ref{equ16}) $\Rightarrow$ (\ref{equ12}): \\
Any of the following assertions implies the next one:
\begin{align*}
  L(U(A,B')) & \subseteq L(U(A,B')), \\
  L(U(A,B')) & \subseteq R(B,A), \\
M(U(A,B'),B) & \subseteq L(A), \\
L(U(A,B'),B) & \subseteq L(A).
\end{align*}
\end{proof}

By Theorem~\ref{th3} we obtain a stronger version of a result analogous to Theorem~\ref{th2}.

\begin{corollary}\label{cor2}
Let $(P,\leq,{}')$ be a poset with a unary operation and $M$ and $R$ defined by {\rm(\ref{equ13})} and {\rm(\ref{equ14})}, respectively. Then $(P,\leq,{}',M,R,0,1)$ is a generalized operator residuated poset if and only if it satisfies both conditions {\rm(\ref{equ11})} and {\rm(\ref{equ12})}.
\end{corollary}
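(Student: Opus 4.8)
The plan is to reduce the statement entirely to Theorem~\ref{th3}, after checking that every \emph{other} defining requirement of a generalized operator residuated poset is automatic for the operations $M$ and $R$ given by (\ref{equ13}) and (\ref{equ14}). Recall that such a poset must satisfy: commutativity of $M$, conditions (i) and (iii) of Definition~\ref{def1}, and the generalized operator adjointness (\ref{equ15}) and (\ref{equ16}). My strategy is to dispatch the first three as trivialities and then invoke Theorem~\ref{th3} for the last two.

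First I would observe that $M$ is commutative: since $M(A,B)=L(A,B)=L(A)\cap L(B)$ is visibly symmetric in $A$ and $B$, we have $M(A,B)=M(B,A)$, so the distinction between ``residuated'' and ``left residuated'' is free here. Next I would verify conditions (i) and (iii) directly from the definitions. For (i), using $L(1)=P$ we get $M(x,1)=L(x,1)=L(x)\cap L(1)=L(x)$, and symmetrically $M(1,x)=L(x)$. For (iii), since $0\leq z$ for every $z\in P$ we have $U(0,x')=U(x')$, hence $R(x,0)=L(U(0,x'))=L(U(x'))=L(x')$. None of these computations uses (\ref{equ11}) or (\ref{equ12}); they rely only on the definitions of $M$ and $R$ together with the boundedness of $P$.

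It then remains to handle the generalized operator adjointness, and this is precisely what Theorem~\ref{th3} supplies: it asserts that (\ref{equ15}) is equivalent to (\ref{equ11}) and that (\ref{equ16}) is equivalent to (\ref{equ12}). Consequently the pair (\ref{equ15}), (\ref{equ16}) holds if and only if the pair (\ref{equ11}), (\ref{equ12}) holds. Assembling the pieces, the seventuple $(P,\leq,{}',M,R,0,1)$ is a generalized operator residuated poset exactly when commutativity, (i), (iii), (\ref{equ15}) and (\ref{equ16}) all hold; as the first three are always satisfied, this is equivalent to (\ref{equ15}) and (\ref{equ16}), hence to (\ref{equ11}) and (\ref{equ12}).

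I expect no genuine obstacle: Theorem~\ref{th3} carries the entire analytical weight, and the corollary is essentially a packaging statement. The only point requiring a moment of care is to confirm that the auxiliary conditions (i), (iii) and the commutativity of $M$ do \emph{not} secretly depend on (\ref{equ11}) or (\ref{equ12}); once one checks, as above, that they follow purely from the formulas for $M$ and $R$, the equivalence drops out immediately from Theorem~\ref{th3}.
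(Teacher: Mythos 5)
Your proof is correct and follows exactly the route the paper intends: the corollary is stated as an immediate consequence of Theorem~\ref{th3}, with the commutativity of $M$ and conditions (i) and (iii) verified by the same trivial computations that appear in the proof of Theorem~\ref{th2}. Your explicit check that these auxiliary conditions do not depend on (\ref{equ11}) or (\ref{equ12}) is a worthwhile addition, since the paper leaves that verification implicit.
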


Authors' addresses:

Ivan Chajda \\
Palack\'y University Olomouc \\
Faculty of Science \\
Department of Algebra and Geometry \\
17.\ listopadu 12 \\
771 46 Olomouc \\
Czech Republic \\
ivan.chajda@upol.cz

Helmut L\"anger \\
TU Wien \\
Faculty of Mathematics and Geoinformation \\
Institute of Discrete Mathematics and Geometry \\
Wiedner Hauptstra\ss e 8-10 \\
1040 Vienna \\
Austria, and \\
Palack\'y University Olomouc \\
Faculty of Science \\
Department of Algebra and Geometry \\
17.\ listopadu 12 \\
771 46 Olomouc \\
Czech Republic \\
helmut.laenger@tuwien.ac.at
\end{document}